\documentclass{article}

\usepackage[utf8]{inputenc}
\usepackage[T1]{fontenc}
\usepackage[colorlinks = true, linkcolor = blue, urlcolor = blue, citecolor = blue, anchorcolor = blue]{hyperref} 
\usepackage{xcolor}
\usepackage{amsmath,amsthm,amssymb}
\usepackage{graphicx}
\usepackage{subfigure}
\usepackage{multicol}
\usepackage{listings}
\usepackage[round]{natbib}
\usepackage{authblk}

\newtheorem{theorem}{Theorem}[section]
\newtheorem{definition}{Definition}[section]
\theoremstyle{definition}
\newtheorem{example}{Example}

\newtheorem{proposition}{Proposition}

\def\Pr{\mathrm{P}}
\def\E{\mathrm{E}}
\def\bm#1{\mathbf{#1}}

\title{Pareto processes for threshold exceedances in spatial extremes}
\date{}
\author[1]{Clément Dombry}
\author[2]{Juliette Legrand}
\author[3]{Thomas Opitz}

\affil[1]{{\small Université de Franche Comté, CNRS, LmB (UMR 6623), F-25000 Besançon, France}}
\affil[2]{{\small Univ Brest, CNRS UMR 6205, Laboratoire de Mathématiques de Bretagne Atlantique, France}}
\affil[3]{{\small Biostatistics and Spatial Processes, INRAE, Avignon, France}}

\begin{document}
\maketitle

This preprint is an author version of a chapter to appear in a collaborative book.

\tableofcontents
\pagebreak

\section{Introduction}

Modeling extremes of spatial and spatio-temporal phenomena is an important branch of extreme value theory that has seen substantial developments in the last decades and found many applications, especially in environmental sciences \citep{DG12,DPR12,Huser2022}. An example is the implementation of offshore wind farms that can be vulnerable to extreme sea and wind conditions. In this specific context, application studies have focused on modeling the extremes of significant wave heights, a quantity measuring the severity of a sea state, over a given area (see Figure \ref{fig::MapHsData}). Statistical extreme-value analyses for these data will be used to illustrate the models and methods reviewed in this chapter.

\begin{figure}[htb]\label{fig::MapHsData}
\begin{center}
\includegraphics[scale=0.35]{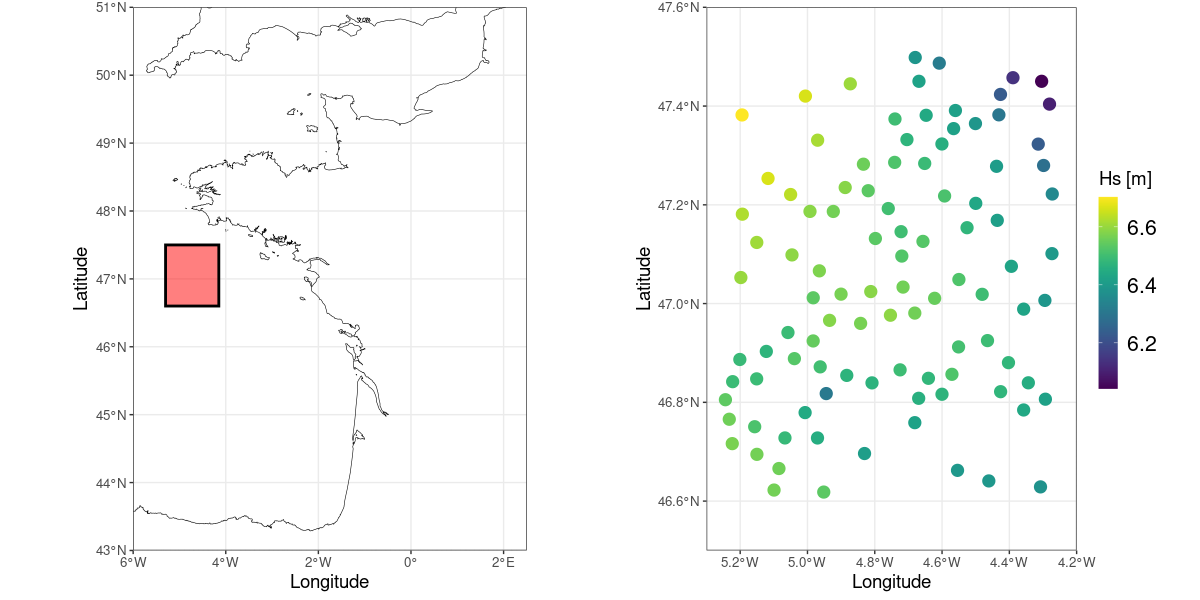}
\end{center}
\caption[Map data]{Daily hindcast\footnotemark\ significant wave heights ($H_s$, [m]) off the French coast during winter months (JFM) from 1995 to 2015. Left panel: study region. Right panel: marginal $95\%$ empirical quantile at each location.}
\end{figure}

Extreme value \footnotetext{Hindcast data refers to historical outputs from numerical models} statistics rely on two important different paradigms, the so-called Block-Maximum (BM) and Peaks-Over-Threshold (POT) methods. In a nutshell, the BM approach is based on the study of  the maxima of the phenomenon  over different periods of times (typically monthly or yearly maxima), while the POT approach focuses on occurrences of the phenomenon that exceed a fixed high threshold. Probability theory then shows that two classes of limit distributions arise when considering asymptotic behavior. By the Fisher-Tippett-Gnedenko theorem, the probability distribution of maxima over large blocks is well approximated by a Generalized Extreme Value (GEV) distribution, whereas the Pickands-Balkema-de Haan theorem states that the probability distribution of exceedances over a high threshold is well approximated by a Generalized Pareto (GP) distribution. Importantly, the two theorems hold under the same tail-regularity assumption on the distribution of the underlying phenomena (the so-called first-order condition), so that the choice between the two approaches belongs to the statistician and could be motivated by the application setting.  Note in passing that a third approach, the so-called Point-Process approach, has the conceptual and theoretical advantage to encompass both the BM and POT methods. However, it is  far less popular in practice and in applications, so that we do not 
emphasize it here.

In the univariate setting, the two notions of maxima and of exceedances over a threshold are unambiguous. This is not any more the case in the multivariate or spatial setting. Regarding the BM approach, maxima are typically defined as componentwise maxima in the multivariate setting, and as pointwise maxima in the spatial setting. This gives rise to the theory of multivariate GEV distributions 
and of max-stable random fields.
One difficulty is that the use of componentwise and pointwise maxima results in ``composite observations" or ``composite events", since for instance the yearly maxima over a domain occur on different days at different locations. From a theoretical point of view, this is related to de Haan's constructive representation of a max-stable random vector (or random field) as the pointwise maximum of infinitely many random vectors (or random fields) representing the underlying extreme events \citep{deHaan1984}. The complexity of this representation makes the simulation and statistical analysis of max-stable random vectors and random fields quite difficult, and this has been  subject of very active and fruitful research throughout the last decades.

Regarding the POT approach in a multivariate or spatial setting, the meaning of an exceedance over a high threshold is not straightforward. It was first defined using the sup-norm, meaning that a random field is large if it is large at some location at least (in a multivariate setting, see \citet{RT2006}.
From a mathematical stance, this perspective allows to work in the space of (continuous) random fields endowed with the sup-norm and to develop the theory of simple Pareto processes \citep{FdH14}, where \emph{simple} refers to a specific choice of marginal distributions. Moreover, a close relationship between functional regular variation, simple max-stable processes and simple Pareto processes can be established. Interestingly, the simple Pareto process has a much simpler probabilistic structure than the corresponding max-stable process, which facilitates its simulation and statistical modeling. From an applied perspective, however, the use of the sup-norm is quite restrictive, since the notion of an extreme event can be very different depending on the application context. As proposed in \citet{DR15} and \citet{dFD18}, it makes sense to use a risk functional adapted to the specific application to decide whether an event is extreme or not, and to define functional exceedances of the spatial field over a high threshold with respect to this risk functional. The resulting models are called \emph{simple $r$-Pareto processes}, where $r$ stands for risk (note that \citet{DR15} uses the terminology $\ell$-Pareto process with $\ell$ standing for loss). The last conceptual step that ensures general practical applicability  through the notion of generalized Pareto processes is to allow for flexibility in the marginal tail index: while the simpler model considers a positive tail index that does not vary with the spatial location, \citet{dFD18} defines the \emph{generalized $r$-Pareto process} that allows for a general tail index (positive, null or negative) that can possibly vary spatially.

The structure of this chapter is as follows. In \S~\ref{sec:theory-pareto-processes}, we first review  the theory of Pareto processes and the various approaches developed in this context. Then, in \S~\ref{sec:Models-etc}, we discuss  modeling, inference and simulation of Pareto processes. Finally, the remaining two sections provide an application and some concluding remarks.

\section{Theory of Pareto processes}
\label{sec:theory-pareto-processes}
Following \citet{FdH14}, we first introduce the simple Pareto process and generalized Pareto process, which extend the POT approach to exceedances in a random field with respect to the sup-norm. Subsequently, exceedances with respect to a more general risk functional are considered for modeling flexibility and lead to the definition of generalized $r$-Pareto process \citep{DR15,dFD18}.

\subsection{The simple Pareto process}
We first define the  simple Pareto process and introduce its main properties. It is  motivated from the wish to adopt a POT perspective for spatial extremes, where the notion of exceedance is understood with respect to the sup-norm. 

Consider a compact subset $S$ of $\mathbb{R}^2$, which corresponds to the finite spatial window where the random process is observed, and let $C^+(S)$ denote the space of non-negative real-valued continuous functions on $S$ endowed with the sup-norm
\[
\|f\|=\max_{s\in S} |f(s)|,\quad f\in C^+(S).
\]
We say that  $f\in C^+(S)$ exceeds the threshold $u$ if $\|f\|>u$, meaning that $f(s)>u$ at some location $s\in S$.

\begin{definition}\label{ch16:def1}
  \index{simple Pareto process} 
  We say that $Z=(Z(s))_{s\in S}$ is a simple Pareto process if it can be written in the product form $Z=RY$ satisfying the following structure:
  \begin{itemize}
      \item $R$ has standard unit Pareto distribution, \emph{i.e.}, $\Pr(R>u)=1/u$ for $u>1$;
      \item $Y=(Y(s))_{s\in S}$ is a nonegative continuous random process satisfying $\Pr(\|Y\|=\theta)=1$ for some $\theta>0$, and $\E[Y(s)]=1$ for all $s\in S$;
      \item $R$ and $Y$ are independent.
  \end{itemize}
\end{definition}

The distribution of $Z$ depends only on the distribution of $Y$, called the spectral measure and denoted by $\rho$.  We call $Z$ the \emph{simple Pareto process with spectral measure $\rho$}. The spectral measure is defined on the sphere $C_\theta^+(S)=\{f\in C^+(S)\colon \|f\|=\theta\}$ of the sup-norm. The norm $\|Z\|$ 
satisfies
\[
\Pr(\|Z\|>u)=\Pr(\theta R>u)=\theta u^{-1},\quad u >\theta,
\]
and has unit Pareto distribution with scale $\theta$, which is called the areal extremal coefficient of $Z$. Note that $R$ and $Y$ can be obtained from $Z$ as $R=\theta^{-1}\|Z\|$ and $Y=\theta Z/\|Z\|$. Finally, the condition $\E[Y(s)]=1$ ensures a normalization of the marginal distributions: for all $s\in S$,
\[
\Pr(Z(s)>u)=\Pr(RY(s)>u)=\E[\min(1,u^{-1}Y(s))],\quad u>0, 
\]
is equal to $u^{-1}$ when $u>\theta$. Therefore, the marginal tail is ultimately standard unit Pareto.

\begin{example}\label{ex:construction-with-normalized-u} 
 Starting from any stochastic process $U=(U(s))_{s\in S}$ with paths almost surely in $C^+(S)\setminus\{0\}$, we can consider the normalized process $Y(s)=\tilde{Y}(s)/\mathrm{E}[\tilde{Y}(s)]$ with $\tilde{Y}=U/\|U\|$. 
 Then, the distribution of $Y$ defines a valid spectral measure, and we can construct the corresponding Pareto process by setting $Z=RY$, where $R$ is a  random variable with standard Pareto distribution and independent of $U$. For example, if $G$ is a continuous standard Gaussian process on $S$ and $U=\exp(G)$ is the associated log-Gaussian process, then the associated spectral process is obtained using $\tilde{Y}(s)=\exp(G(s)-\max_S G)$, $s\in S$. 
\end{example}

Next, we state elementary properties of the simple Pareto process,
the first one being the form of its finite-dimensional marginal distributions. For $s_1,\ldots, s_k\in S$ and $r_1,\ldots,r_k>0$,  we compute the multivariate survival function
\begin{align*}
\Pr(Z(s_1)>u_1,\ldots,Z(s_k)>u_k)&=\Pr(R>\max_{1\leq i\leq k} u_i/Y(s_i))\\
&=\E[\min(1,u_1^{-1}Y(s_1),\ldots,u_k^{-1}Y(s_k))].
\end{align*}
Similar formulas can be obtained for events of the form $\{Z> f\}$ or $\{Z\leq f\}$ for $f\in C^+(S)$:
\begin{align*}
\Pr(Z>f)&=\E\left[\min\left(1,\min_{s\in S}\frac{Y(s)}{f(s)}\right)\right],\\
\Pr(Z\leq f)&=1-\E\left[\min\left(1,\max_{s\in S}\frac{Y(s)}{f(s)}\right)\right].
\end{align*}

\index{POT-stability}

The second important property is the so-called POT-stability. For a measurable subset $A\subset C^+(S)$ and $u>1$, the simple Pareto process satisfies
\[
\Pr(u^{-1}Z\in A \mid \|Z\|>\theta u)=\Pr(Z\in A ).
\]
This states that conditionally on the exceedance $\|Z\|>u \theta$, the rescaled Pareto process $u^{-1}Z$ has the same distribution as the original process $Z$. To prove this, it suffices to verify the equality   for sets of the form 
\[
A=\{f\colon \theta^{-1}\|f\|> v, \theta f/\|f\|\in B\} \quad\mbox{with $v>0$ and $B\subset C_\theta^+(S)$.}
\]
For such sets, we can compute
\begin{align*}
\Pr(u^{-1}Z\in A \mid \|Z\|>u\theta)&=\Pr(R>uv,Y\in B \mid R>u)\\
&= \Pr(R>v)\Pr(Y\in B)\\
&=\Pr(Z\in A).
\end{align*}
This POT-stability is the reason why Pareto processes play a central role in the POT approach, in the same way as max-stability is a central property in the BM approach. To wit, it can be proven that a nondegenerate stochastic process satisfying the POT-stability property is (up to a power transform) a Pareto process in the sense of Definition~\ref{ch16:def1} -- see \cite[Theorem 2.1]{FdH14} or \cite[Theorem 2]{DR15} for precise statements.

The third important property is the following homogeneity property, which forms the basis for joint tail extrapolation. It states that for each measurable subset $A\subset \{f\in C^+(S)\colon \|f\|\geq \theta\}$, we have
\[
\Pr(Z\in uA)= u^{-1}\Pr(Z\in A),\quad u>1.
\]
The proof is straightforward from POT-stability, because the assumption on $A$ implies
\begin{align*}
\Pr(Z\in uA)&=\Pr(Z\in uA,\|Z\|>u\theta) \\
&=\Pr(\|Z\|>u\theta)\Pr(u^{-1}Z\in A\mid \|Z\|>u\theta)\\
&=u^{-1}\Pr(Z\in A).
\end{align*}
This property is useful for extrapolation in practice since for large $u$, the event $uA$ often defines an extreme event for which $\{Z\in uA\}$ has not been observed, so that its probability cannot be estimated empirically. In view of the above equality, its probability is the product of the small number $u^{-1}$ and the probability of the non-extreme event $\{Z\in A\}$, where the latter can be estimated directly through its relative frequency within a sample of observations.

We now introduce the fundamental theorem justifying the introduction of  the simple Pareto process and its use in spatial extreme value theory. In a nutshell, it states the equivalence of the approximation used in the BM and POT methods and the regular variation (RV) assumption. We consider here a standardized non-negative continuous random field $X=(X(s))_{s\in S}$, \emph{i.e.}, we assume standard Pareto margins, 
\begin{equation}\label{eq:standard-Pareto-margins}
\Pr(X(s)>u)=u^{-1} \quad\mbox{for all $u>1$ and $s\in S$}.
\end{equation}
The case of general margins will be considered in the next section.

\begin{theorem}\label{ch16:thm1}
Let $X=(X_s)_{s\in S}$ be a standardized continuous random field. Then the following statements are equivalent:
\begin{enumerate}
    \item[(RV)]  there exists a measure $\nu$ on $C^+(S)$ such that 
    \begin{equation}\label{eq:measure-nu}
    n\Pr(X\in nA)\longrightarrow \nu(A) \quad\mbox{as $n\to\infty$}
    \end{equation}
    for all measurable subsets $A\subset C^+(S)$ that are bounded away from $0$\footnote{The measurable set $A\subset C^+(S)$ is bounded away from $0$ if $\inf_{f\in A} \|f\|>0$.} and continuity sets of $\nu$\footnote{A set $A$ is a continuity set of $\nu$ if $\nu(\partial A)=0$ with $\partial A$ denoting the boundary of $A$.};
    \item[(BM)]  $X$ belongs to the maximum domain of attraction of a continuous simple max-stable random field $\eta$, i.e.  
    \[
    n^{-1}\max(X_1,\ldots,X_n)\stackrel{d}\longrightarrow \eta \quad \mbox{in $C^+(S)$ as $n\to\infty$},
    \]
    where $X_i$, $i\geq 1$, are independent copies of $X$ and the maximum is taken pointwise;
    \item[(POT)]  the normalized exceedance of $X$ over a high threshold converges to a simple Pareto process $Z$, \emph{i.e.}, 
   \begin{equation}\label{eq:conv-pp}
    u^{-1} X\,\big|\, \|X\|>u\theta \stackrel{d}\longrightarrow Z \quad\mbox{in $C^+(S)$ as $u\to\infty$}.
    \end{equation}
\end{enumerate}
The three limiting objects are closely related:  the exponent measure of $\eta$ in (BM) is equal to the measure $\nu$ appearing in (RV); the distribution of the Pareto process $Z$ in (POT) is characterized by
\[
\Pr(Z\in \cdot)=\frac{\nu(\cdot\cap \{f\colon \|f\|>\theta\})}{\nu(\{f\colon\|f\|>\theta\})}\quad\mbox{with}\quad \theta=\nu(\{f\colon\|f\|>1\}),
\]
and the spectral measure $\rho$ is given as 
\[
\rho(\cdot)=\frac{\nu(\{f\colon \theta f/\|f\|\in \,\cdot\,,\,\|f\|>\theta)\}}{\nu(\{f\colon\|f\|>\theta\})}. 
\]
\end{theorem}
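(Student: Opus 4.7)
The plan is to take (RV) as the pivot and derive (BM) and (POT) from it (this yields the explicit formulas for $\nu$, $Z$ and $\rho$), and then sketch the converses. I first note the scaling property $\nu(uA) = u^{-1}\nu(A)$ for $u>0$, which follows directly from the defining limit by the change of variable $n' = nu$. Applied to $A = \{f : \|f\| > 1\}$ this gives $\nu(\{f : \|f\| > v\}) = \theta/v$ with $\theta = \nu(\{f : \|f\| > 1\})$, and in particular $\nu(\{f : \|f\| > \theta\}) = 1$: this is what makes the normalization by $\theta$ in the Pareto definition consistent. For (RV) $\Rightarrow$ (BM), for $f \in C^+(S)$ with $\inf_s f(s) > 0$ the set $A_f = \{g : g \not\leq f\}$ is bounded away from $0$, so (RV) gives $n\Pr(X \not\leq nf) \to \nu(A_f)$, whence
\[
\Pr\bigl(n^{-1}\max(X_1, \ldots, X_n) \leq f\bigr) = \bigl(1 - \Pr(X \not\leq nf)\bigr)^n \longrightarrow \exp(-\nu(A_f)),
\]
identifying $\eta$ as the simple max-stable field with exponent measure $\nu$; functional convergence in $C^+(S)$ then requires a standard tightness argument.

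For (RV) $\Rightarrow$ (POT), let $A \subset \{f : \|f\| > \theta\}$ be a $\nu$-continuity set bounded away from $0$. Passing from the integer sequence in (RV) to a continuous parameter $u \to \infty$ (justified by monotonicity of $u \mapsto \Pr(X \in uA)$ on up-sets and by $\nu$-continuity), I obtain
\[
\Pr(u^{-1}X \in A \mid \|X\| > u\theta) = \frac{u\,\Pr(X \in uA)}{u\,\Pr(\|X\| > u\theta)} \longrightarrow \frac{\nu(A)}{\nu(\{f : \|f\|>\theta\})} = \nu(A),
\]
which is exactly the claimed distribution of $Z$. To check that $Z$ is a simple Pareto process in the sense of Definition~\ref{ch16:def1}, set $R = \|Z\|/\theta$ and $Y = \theta Z/\|Z\|$; for $v > 1$ and Borel $B \subset C_\theta^+(S)$, the scaling of $\nu$ gives
\[
\Pr(R > v,\, Y \in B) = \nu\bigl(v \cdot \{f : \|f\| > \theta,\, \theta f/\|f\| \in B\}\bigr) = v^{-1}\rho(B),
\]
which yields simultaneously the unit Pareto law of $R$, the stated formula for $\rho$, and the independence of $R$ and $Y$. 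Finally, $\E[Y(s)] = 1$ follows by comparing $\Pr(Z(s) > u) = u^{-1}$ for $u > \theta$ (obtained from (RV) on $\{f : f(s) > u\}$ using the standard Pareto marginals of $X$) with the representation $\Pr(Z(s) > u) = \E[\min(1, Y(s)/u)] = \E[Y(s)]/u$, valid for $u > \theta$ because $Y(s) \leq \|Y\| = \theta$ almost surely.

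For the converses, (BM) $\Rightarrow$ (RV) is classical: the exponent measure of $\eta$ is characterized by $-\log \Pr(\eta \leq f) = \nu(A_f)$ and extends uniquely to a measure on sets bounded away from $0$ realizing the regular variation limit. For (POT) $\Rightarrow$ (RV), I would first derive $u\Pr(\|X\| > u\theta) \to 1$ from the POT limit applied to a marginal event $\{f : f(s_0) > x\}$ combined with the standard Pareto marginals of $X$, then read the above ratio identity in reverse to recover $u\Pr(X \in uA) \to \nu(A)$ on $\{f : \|f\| > \theta\}$, and finally extend $\nu$ to arbitrary sets bounded away from $0$ by the scaling property. The main obstacle throughout is the functional-space aspect: the convergences in (BM) and (POT) are in $C^+(S)$ with the sup-norm topology and not merely in finite-dimensional distribution, which forces one to establish tightness (typically from a modulus-of-continuity bound encoded in (RV)) and to pay attention to the $\nu$-continuity-set condition when invoking the regular variation limit on sets such as $A_f$ or $\{f : \|f\| > \theta,\, \theta f/\|f\| \in B\}$.
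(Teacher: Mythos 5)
The paper does not actually prove Theorem~\ref{ch16:thm1}; it is stated as a known result, with the surrounding text deferring to \citet{FdH14} and \citet{DR15} for precise statements and proofs, so there is no in-paper argument to compare against. That said, your outline is the standard route taken in those references and is sound. The key observations are all present and correct: the $(-1)$-homogeneity $\nu(uA)=u^{-1}\nu(A)$ inherited from the limit \eqref{eq:measure-nu}, which yields $\nu(\{f\colon\|f\|>\theta\})=1$ and thereby explains why $\nu$ restricted to $\{f\colon\|f\|>\theta\}$ is already a probability measure; the derivation of (BM) from $(1-\Pr(X\not\leq nf))^n\to\exp(-\nu(A_f))$; the ratio computation giving (POT) with the stated law of $Z$; the factorization $\Pr(R>v,\,Y\in B)=v^{-1}\rho(B)$, which uses that the angular part $\theta f/\|f\|$ is scale-invariant and simultaneously delivers the unit Pareto law of $R$, the independence of $R$ and $Y$, and the displayed formula for $\rho$; and the identification $\E[Y(s)]=1$ by matching $\Pr(Z(s)>u)=u^{-1}$ (from the standardized margins of $X$) with $\E[Y(s)]/u$ for $u>\theta$, which is legitimate precisely because $Y(s)\leq\|Y\|=\theta$ almost surely. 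Your sketch of (POT) $\Rightarrow$ (RV) via $u\Pr(\|X\|>u\theta)\to 1$ and extension by homogeneity is also the right idea. The caveats you flag yourself --- tightness in $C^+(S)$ beyond finite-dimensional convergence, the restriction to $\nu$-continuity sets when evaluating the limit on $A_f$ or on the polar sets, and the passage from integer $n$ to continuous $u$ (which for non-monotone sets is handled by sandwiching $uA$ between dilations and using continuity of $\nu$, rather than by monotonicity alone) --- are exactly where the technical content of the full proofs in \citet{FdH14} lies, so acknowledging them rather than proving them is appropriate for a sketch but would need to be filled in for a complete argument.
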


A consequence of the preceding Theorem is that there is a one-to-one association of simple Pareto processes and simple max-stable processes. This association is made explicit with the spectral representation of simple max-stable processes \citep{deHaan1984}. Any continuous simple max-stable process $\eta$ on $S$ can be represented in the form
\begin{equation}\label{eq:spectral-construction}
\eta(s)=\max_{i\in\mathbb{N}} \Gamma_i^{-1} Y_i(s), \quad s\in S,
\end{equation}
where $(\Gamma_i)_{i\in\mathbb{N}}$ are the points of a homogeneous point process on $(0,\infty)$ and, independently, $(Y_i)_{i\in\mathbb{N}}$ are i.i.d. non-negative continuous random processes such that $\E[Y_i(s)]=1$ for all $s\in S$, and $\|Y_i\|=\theta$ is almost surely constant. The associated simple Pareto process is $Z(s)=RY(s)$, where $Y$ has the same distribution as the processes $Y_i$. 

\begin{example}\label{ex:BR} As an illustration, one can derive the form of the Pareto process associated with the celebrated Brown-Resnick class \citep{KSdH09}, or more generally with the class of max-stable processes associated with log-Gaussian processes. These max-stable processes can be represented in the form
\[
\eta(s)=\max_{i\geq 1} \Gamma_i^{-1} \tilde Y_i(s), \quad s\in S,
\]
where $(\Gamma_i)_{i\geq 1}$  is as above and $(\tilde Y_i)_{i\geq 1}$ are i.i.d. continuous log-Gaussian processes, such that $\E[\tilde Y_i(s)]=1$ for all $s\in S$. More precisely, one can write  $\tilde Y_i(s)=\exp(G_i(s)-\sigma^2(s)/2)$ where $G_i$ are continuous  Gaussian processes with zero mean and variance function $\E[G_i^2(s)]=\sigma^2(s)$. In this representation, the spectral process $\tilde Y$ is a log-Gaussian process that does not have a constant sup-norm. 

The associated Pareto process has a spectral function $Y$ with norm $\theta=\E[\|\tilde Y_i\|]$ and distribution characterized by
\[
\Pr(Y\in A)=\theta^{-1}\E\left[\|\tilde Y\|\times {1}_{\{ \theta \tilde Y/\|\tilde Y\|\in A\}}\right],\quad A\subset C_\theta^+(S) \mbox{ measurable}.
\]
The distribution of the spectral process $Y$ is fully determined by the probability distribution of the log-Gaussian process $\tilde Y$, but still it is quite involved to simulate according to this distribution \citep{Oesting2018};  see Section~\ref{sec:simulation} below.
\end{example}

\subsection{The generalized Pareto process}\label{sec:gpp}
\index{generalized Pareto process}

Following \citet{FdH14}, the notion of a \emph{generalized Pareto process} is deduced from the notion of simple Pareto process by a suitable transformation of the marginal distributions. Let $Z$ be a simple Pareto process with spectral measure $\rho$ on $C_\theta^+(S)$, and let $\mu,\sigma,\xi$ be continuous functions on $S$, with $\sigma$ positive. The generalized Pareto process $Z_{\mu,\sigma,\xi}$ is defined by
\[
Z_{\mu,\sigma,\xi}(s)=\mu(s)+\sigma(s)\frac{Z(s)^{\xi(s)}-1}{\xi(s)},\quad s\in S,
\]
with the usual convention $\frac{Z(s)^{\xi(s)}-1}{\xi(s)}=\log(Z(s))$ when $\xi(s)=0$. 

Its marginal distribution at location $s$ is given by
\[
\Pr(Z_{\mu,\sigma,\xi}(s)>u)=\Pr\Big[Z(s)>\Big(1+\xi(s)\frac{u-\mu(s)}{\sigma(s)}\Big)^{1/\xi(s)}\Big],
\]
of which the tail is equal to 
\[
\Big(1+\xi(s)\frac{u-\mu(s)}{\sigma(s)}\Big)^{-1/\xi(s)} \quad\mbox{for $u>\mu(s)+\sigma(s)\frac{\theta^{\xi(s)}-1}{\xi(s)}$}.
\]
Hence, the tail function is ultimately equal to the tail of the Generalized Pareto distribution with parameters $(\mu(s),\sigma(s),\xi(s))$.

The generalized Pareto process is justified by a general result stating the equivalence of BM and POT approaches.  Theorems 3.1 and 3.2 in \citet{FdH14}  generalize Theorem~\ref{ch16:thm1} to random processes with general margins that are necessarily standardized, and are summarized in the following single theorem.

\begin{theorem}
Let $X=(X(s))_{s\in S}$ be a continuous random field, and consider independent copies $X_i$, $i\geq 1$, of $X$. The following two statements are equivalent:
\begin{enumerate}
    \item[(BM)] There exist continuous functions  $a_n>0$ and $b_n$, $n\geq 1$, defined on $S$, such that
    \[
    \Big(\max_{1\leq i\leq n}\frac{X_i(s)-b_n(s)}{a_n(s)}\Big)_{s\in S}\stackrel{d}\longrightarrow (\eta(s))_{s\in S}\quad \mbox{as $n\to\infty$}.
    \]
    \item[(POT)] There exist continuous functions $\tilde{a}_u>0$ and $\tilde{b}_u$, $u>0$, defined on $S$, such that 
    \[
    \lim_{u\to\infty} \Pr\big(X(s)>\tilde b_u(s)\ \mbox{for some $s\in S$}\big)=0
    \]
    and
    \begin{equation}\label{eq:conv-gpp}
   \frac{X-\tilde b_u}{\tilde a_u} \ \Big|\ \{ X(s)>\tilde b_u(s) \ \mbox{for some $s\in S$}\} \stackrel{d}\longrightarrow Z, \quad \mbox{$u \to\infty$}.
    \end{equation}
\end{enumerate}
Then, the limit process $\eta$ in (BM) is a continuous max-stable process and the limit process $Z$ in (POT) is a generalized Pareto process. Furthermore, the two processes share the same index function $\gamma(s)$ which corresponds to the extreme value index of the marginal distribution of $\eta(s)$, $Z(s)$ or $X(s)$.
\end{theorem}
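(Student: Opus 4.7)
The plan is to reduce both implications to the standardized-margins version already established in Theorem~\ref{ch16:thm1} by means of a pointwise marginal transformation. At each fixed $s\in S$, restricting (BM) to the singleton $\{s\}$ yields the univariate Fisher--Tippett--Gnedenko theorem, so $X(s)$ lies in the max-domain of attraction of a generalized extreme value distribution with (continuously varying) tail index $\gamma(s)$, location $\mu(s)$, and scale $\sigma(s)$. Classical univariate extreme value theory then supplies a standardizing map $T_s\colon\mathbb{R}\to[1,\infty)$ that sends the upper tail of $X(s)$ onto the standard unit Pareto tail, defined for instance by $T_s(x)=\bigl(1+\gamma(s)(x-\mu(s))/\sigma(s)\bigr)^{1/\gamma(s)}$ when $\gamma(s)\neq 0$, with the usual logarithmic convention when $\gamma(s)=0$.

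First, I would apply $T_s$ pointwise to define $X^*(s)=T_s(X(s))$, which has standard Pareto margins in the tail and continuous sample paths. Under (BM), the functional convergence of $(\max_{1\leq i\leq n} X_i-b_n)/a_n$ to $\eta$ translates, through $T_s$, into functional convergence of $n^{-1}\max_{1\leq i\leq n} X_i^*$ to $\eta^*(s)=T_s(\eta(s))$, a continuous simple max-stable process. Theorem~\ref{ch16:thm1} then delivers the simple Pareto convergence $u^{-1}X^*\,|\,\|X^*\|>u\theta\stackrel{d}\longrightarrow Z^*$ in $C^+(S)$. Second, I would push this back through the inverse $T_s^{-1}$, which is continuous in $s$ and increasing in its argument. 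Choosing $\tilde b_u(s)=T_s^{-1}(u)$ and $\tilde a_u(s)$ to match the asymptotic scaling of $T_s^{-1}$ near infinity, the rescaled exceedance $(X-\tilde b_u)/\tilde a_u$ converges to $Z(s)=T_s^{-1}(Z^*(s))$, which by inspection has the form of the generalized Pareto process with marginal parameters $(\mu(s),\sigma(s),\gamma(s))$ introduced in Section~\ref{sec:gpp}. The converse implication (POT)$\Rightarrow$(BM) proceeds symmetrically, starting from the marginal Pickands--Balkema--de Haan equivalence and applying the same $T_s$.

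The main obstacle is to justify the joint continuity in $s$ of the standardizing transforms and to match the exceedance events on both sides. One must argue that $\mu(s)$, $\sigma(s)$, $\gamma(s)$ inherit continuity in $s$ from the continuity of the limit process $\eta$ (respectively $Z$), and that the univariate tail convergence can be made locally uniform in $s$ by a Potter-type bound, so that the pointwise action of $T_s$ preserves functional convergence in $C(S)$ rather than only finite-dimensional convergence. A related delicate point is to identify the event $\{X(s)>\tilde b_u(s)\ \text{for some }s\in S\}$ with the sup-norm exceedance $\{\|X^*\|>u\theta\}$ required by Theorem~\ref{ch16:thm1}, which forces the particular calibration $\tilde b_u(s)=T_s^{-1}(u)$ up to the areal extremal coefficient $\theta$. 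Once these points are settled, the fact that the two limit processes share the common index function $\gamma(s)$ is immediate from the construction of $T_s$.
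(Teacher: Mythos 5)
The first thing to note is that the paper itself gives no proof of this theorem: it is explicitly presented as a summary of Theorems~3.1 and~3.2 of \citet{FdH14}, to which the reader is referred. Your overall strategy --- standardize the margins pointwise, invoke the simple-margins equivalence of Theorem~\ref{ch16:thm1}, and transform back --- is exactly the route taken in that reference, and you correctly flag the two genuinely delicate points, namely the continuity in $s$ of the marginal parameters and the need for locally uniform (not merely pointwise) marginal convergence so that the transformation preserves convergence in $C(S)$.

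There is, however, one concrete step that fails as written: your standardizing map $T_s(x)=\bigl(1+\gamma(s)(x-\mu(s))/\sigma(s)\bigr)^{1/\gamma(s)}$ standardizes the \emph{limit} distribution, not $X(s)$ itself. The process $X^*(s)=T_s(X(s))$ therefore has margins that are only asymptotically of standard Pareto type, whereas Theorem~\ref{ch16:thm1} is stated for a field whose margins satisfy \eqref{eq:standard-Pareto-margins} exactly, so it cannot be invoked verbatim for $X^*$. The correct move is the marginal probability-integral transform $X^*(s)=1/(1-F_s(X(s)))$, with $F_s$ the (continuous) distribution function of $X(s)$, which makes \eqref{eq:standard-Pareto-margins} hold exactly; the parameters $(\mu(s),\sigma(s),\gamma(s))$ then enter not through $T_s$ but through the extended regular variation of the tail quantile functions $U_s(t)=F_s^{\leftarrow}(1-1/t)$, and it is the locally uniform asymptotics of $U_s$ that force the calibration $\tilde b_u(s)=U_s(u)$ together with the associated auxiliary scale function for $\tilde a_u(s)$. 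With that substitution, and with the uniformity argument you identify but do not carry out, the reduction to Theorem~\ref{ch16:thm1} and the identification of $Z$ with the marginal transform of $Z^*$ described in Section~\ref{sec:gpp} go through as you sketch; the common index function $\gamma(s)$ is then indeed immediate.
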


\subsection{Risk functionals and $r$-Pareto processes}

\index{risk functional}
\index{$r$-Pareto process}

As discussed in the introduction, the notion of exceedance in a multivariate or spatial setting is not unique, and several notions could be useful to the practitioner. A natural idea introduced in \citet{DR15,dFD18,dFD22} is to define the notion of an extreme event through a risk functional. A risk functional is defined on the space $C(S)$ of continuous functions on $S$ and is a function $r\colon C(S)\to \mathbb{R}$ that assigns to each spatial configuration $f=(f(s))_{s\in S}$ a risk $r(f)$.  A realization $f$ is considered as extreme when its risk $r(f)$ is large. 

For example, \citet{Buishand2008} are interested in the total amount of rainfall over a catchment $S$, \emph{i.e.}, $r(f)=\int_{S} f(s)\mathrm{d}s$, where $f(s)$ represents the amount of precipitation at location $s\in S$. More generally, for spatial extreme rainfall, \citet{dFD18} consider risk functionals of the form $r_p(f)=\left(\int_{S} |f(s)|^p\mathrm{d}s\right)^{1/p}$ with $p>0$; for large $p$, the risk functional get closer to the maximum risk functional $\max_{s\in S}|f(s)|$, while smaller values of $p$ allow all locations to contribute to the value of the risk functional in a more balanced way. To model extreme heatwaves over Europe, \citet{K2024} consider a risk functional $r$  as the spatial mean of temperature anomalies $r(f)=\lvert S\rvert^{-1}\int_{S} f(s)\mathrm{d}s$. Another example concerns the situation where a  specific location $s_0\in S$ is of interest with the risk functional chosen as $r(f)=f(s_0)$.

The theory of $r$-Pareto processes \citep{DR15} and Generalized $r$-Pareto processes \citep{dFD18,dFD22} considers convergence of the normalized exceedances
\[
\frac{X-b_u}{a_u}\,\Big|\, r(X)>a_u \stackrel{d}\longrightarrow Z, \quad \mbox{$u \to\infty$}.
\]
The theory provides general conditions on the stochastic process $X$ for such a convergence to hold and characterizations of the limit process $Z$. 

Relatively simple conditions can be given in the framework of regularly varying stochastic processes and homogeneous risk functionals. A continuous stochastic process is called regularly varying on $C(S)$ with exponent $\alpha>0$ and spectral measure $\sigma$ (a probability measure on the unit sphere $C_1(S)$), noted $X\in \mathrm{RV}_\alpha(C(S),\sigma)$, if 
there exists a normalisation sequence $(b_n)$ such that
\[
n\Pr\left(X/\|X\|\in A,\,\|X\|>ub_n\right)\longrightarrow u^{-\alpha}\sigma(A),\quad n \rightarrow\infty,
\]
for all $u>0$ and  $A\subset C_1(S) $ (continuity set of $\sigma$). Necessarily, the sequence $b_n$ satisfies $n\Pr(\|X\|>b_n)\to 1$, and we also have the convergence
\[
n\Pr\left(X/b_n\in\cdot\right)\longrightarrow \nu(\cdot), \quad n\rightarrow \infty,
\]
where the limit measure $\nu$ is homogeneous of order $-\alpha$, \emph{i.e.}, $\nu(tA)=t^{-\alpha}\nu(A)$ for any $t>0$ and measurable set $A\subset C(S)$, and has infinite mass (with explosion of the mass around the function $0$).

\begin{theorem}[\citealt{DR15}]\label{thm:Dombry-Ribatet}
Assume that $X\in \mathrm{RV}_\alpha(C(S),\sigma)$. Let  $r:C(S)\to [0,\infty)$ be a risk functional assumed to be $1$-homogeneous, \emph{i.e.}, $r(uf)=ur(f)$ for all $u>0$. If $r$ is continuous at $0$ and does not vanish $\sigma$-almost-everywhere, then the normalized $r$-exceedances of $X$ converge in distribution:
 \begin{equation}\label{eq:conv-rpp}
u^{-1}X\,\big|\, r(X)>u\stackrel{d}\longrightarrow Z, \quad \mbox{as $u\to\infty$},
\end{equation}
where $Z\stackrel{d}=RY$ with $R$ and $Y$ independent, $R$ possessing a standard Pareto distribution with index $\alpha$ (\emph{i.e.}, $\Pr(R>u)=u^{-1/\alpha}$, $u>1$) and  $Y$  a distribution characterized by
\begin{equation}\label{eq:r-pp-spectral}
\Pr(Y\in \cdot\,)= \frac{\int r(f)^\alpha 1_{\{f/r(f)\in \,\cdot\,\}}\sigma(\mathrm{d}f)}{\int r(f)^\alpha \sigma(\mathrm{d}f)}.
\end{equation}
Equivalently, we have
\begin{equation*}
\Pr(Z\in \cdot\,)= \frac{\nu(\,\cdot\cap\{f\colon r(f)>1\})}{\nu(\{f\colon r(f)>1\})}.
\end{equation*}
\end{theorem}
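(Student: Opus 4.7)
The plan is to reduce the $r$-conditioning to a standard regular-variation computation, using the $1$-homogeneity of $r$, and then identify the limit through polar decomposition. The essential tool is the polar factorization of the limit measure $\nu$ associated with $X\in\mathrm{RV}_\alpha(C(S),\sigma)$: the identity $\nu\{f\colon f/\|f\|\in A,\ \|f\|>t\}=t^{-\alpha}\sigma(A)$ immediately yields
\[
\int h(f)\,\nu(\mathrm{d}f)=\int_{C_1(S)}\int_0^\infty h(t\omega)\,\alpha t^{-\alpha-1}\,\mathrm{d}t\,\sigma(\mathrm{d}\omega)
\]
for every measurable $h\colon C(S)\to[0,\infty)$.

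First I would verify that $\{f\colon r(f)>1\}$ is bounded away from $0$: by $1$-homogeneity $r(0)=0$, so continuity of $r$ at $0$ produces $\delta>0$ such that $\|f\|<\delta$ implies $r(f)<1$. Combined with the tautology $r(X)>u\iff r(X/u)>1$, and the continuous-$u$ form of regular variation $\Pr(X/u\in\cdot\,)/\Pr(\|X\|>u)\to\nu(\cdot\,)$ on continuity sets bounded away from $0$, this gives
\[
\frac{\Pr(u^{-1}X\in B,\,r(X)>u)}{\Pr(\|X\|>u)}\longrightarrow\nu(B\cap\{r>1\}).
\]
Specializing to $B=C(S)$ and using the polar factorization, the denominator limit is $\nu(\{r>1\})=\int r(\omega)^\alpha\,\sigma(\mathrm{d}\omega)$, which is strictly positive precisely because $r$ does not vanish $\sigma$-almost everywhere. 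Taking the ratio,
\[
\Pr(u^{-1}X\in B\mid r(X)>u)\longrightarrow\frac{\nu(B\cap\{r>1\})}{\nu(\{r>1\})},
\]
which is the measure-theoretic description of the law of $Z$ stated at the end of the theorem and establishes \eqref{eq:conv-rpp}.

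Next, to recover the product representation $Z\stackrel{d}{=}RY$, I would compute the joint distribution of $(r(Z),Z/r(Z))$ directly from the law just obtained, using the polar factorization and the change of variable $s=t\,r(\omega)$:
\[
\Pr(r(Z)>v,\,Z/r(Z)\in A)=v^{-\alpha}\,\frac{\int r(\omega)^\alpha\,1_{\{\omega/r(\omega)\in A\}}\,\sigma(\mathrm{d}\omega)}{\int r(\omega)^\alpha\,\sigma(\mathrm{d}\omega)},\qquad v\geq 1.
\]
The factor $v^{-\alpha}$ separates cleanly from the angular integral, which simultaneously reveals the independence of $R:=r(Z)$ and $Y:=Z/r(Z)$, the Pareto tail of $R$, and the spectral formula \eqref{eq:r-pp-spectral}.

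The main technical obstacle is the continuity-set hypothesis underlying the regular-variation convergence: one must know that $\nu\bigl(\partial(B\cap\{r>1\})\bigr)=0$, and in particular that $\{f\colon r(f)=1\}$ carries no $\nu$-mass. Via the polar factorization this reduces to a $\sigma$-almost everywhere statement about the level sets of $r$, which is natural under continuity and non-vanishing of $r$ on the support of $\sigma$ but requires a careful measure-theoretic argument. A secondary subtlety is the passage from the sequential definition of regular variation (along $b_n$) to a continuous limit $u\to\infty$; this is routine via monotonicity of $u\mapsto\Pr(\|X\|>u)$ and slow variation of its tail.
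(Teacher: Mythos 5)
Your argument is correct and follows essentially the same route as the original proof in \citet{DR15}, which this chapter cites without reproducing: show that $\{f\colon r(f)>1\}$ is bounded away from $0$ via $1$-homogeneity and continuity of $r$ at $0$, invoke functional regular variation to get $\Pr(u^{-1}X\in\cdot\mid r(X)>u)\to\nu(\cdot\cap\{r>1\})/\nu(\{r>1\})$, and polar-decompose $\nu$ to extract the product form $Z=RY$. Two small remarks: the continuity-set worry you flag as the main obstacle resolves in one line, since the polar factorization gives $\nu(\{f\colon r(f)=1\})=\int_{C_1(S)}\int_0^\infty 1_{\{t\,r(\omega)=1\}}\,\alpha t^{-\alpha-1}\,\mathrm{d}t\,\sigma(\mathrm{d}\omega)=0$ because the inner set is at most a singleton for each $\omega$ (one does need $r$ continuous, not merely continuous at $0$, for $\partial\{r>1\}\subset\{r=1\}$, as is assumed in the cited reference); and your radial law $\Pr(r(Z)>v)=v^{-\alpha}$ is the one consistent with the paper's definition of $\mathrm{RV}_\alpha$ and with its displayed characterization $\Pr(Z\in\cdot)=\nu(\cdot\cap\{r>1\})/\nu(\{r>1\})$, so the parenthetical $\Pr(R>u)=u^{-1/\alpha}$ in the theorem statement reflects a different (and here internally inconsistent) parametrization rather than an error on your part.
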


The form of the limit distribution implies that  $r(Y)=1$ almost surely, so that $r(Z)=R$ has a standard Pareto distribution with index $\alpha$. The condition that $r$ is non-negative is not restrictive, because one can always replace $r(f)$ by $r_+(f)=\max(r(f),0)$. Also, the result can be adapted for $\beta$-homogeneous risk functionals with $\beta>0$, because one can always replace $r(f)$ by $r(f)^{1/\beta}$, where $r^{1/\beta}$ is $1$-homogeneous.

\begin{example}\label{ex:risk-at-one-location}
Specific models for regularly varying random fields with index $\alpha$ are max-stable random processes with $\alpha$-Fr\'echet margins, and one can consider the corresponding $r$-Pareto processes. Assume $\eta$ is an $\alpha$-Fréchet max-stable process represented in the form
\[
\eta(s)= \max_{i\in\mathbb{N}}\Gamma^{-1/\alpha}\tilde Y_i(s),\quad s\in S,
\]
with $(\Gamma_i)_{i\in\mathbb{N}}$ the points of a homogeneous point process on $(0,\infty)$, and $(\tilde Y_i)_{i\geq 1}$ i.i.d. copies of a non-negative random field $\tilde Y$ satisfying $\E[\tilde Y(s)^\alpha]<\infty$. The $r$-Pareto process takes the form $Z=RY$ as in  Theorem~\ref{thm:Dombry-Ribatet}, where Equation~\eqref{eq:r-pp-spectral} can be rewritten as
\[
\Pr(Y\in A)= \frac{\E\left[r(\tilde Y)^\alpha  1_{\{\tilde Y/r(\tilde Y)\in A\}}\right]}{\E\left[r(\tilde Y)^\alpha\right]}.
\]
For the simple max-stable process considered in Example~\ref{ex:BR}, an important case arises for the risk functional $r(f)=f(s_0)$. Here, $\alpha=1$ and $\tilde Y(s)=\exp(G(s)-\sigma^2(s)/2)$ and, according to \citet[Proposition 6]{DEO16},  $Y$ is again a log-Gaussian process  given by 
\begin{equation}\label{eq:Y-s_0}
Y(s)=\exp\left(G(s)-G(s_0)-\frac{1}{2}\mathrm{Var}\Big(G(s)-G(s_0)\Big)\right).
\end{equation}
Similarly, the class of extremal-$t$ processes \citep{O2013} also gives rise to explicit formulas, see \citet[Proposition 7]{DEO16} and \citet{TO15}.
\end{example}

\subsection{Generalized $r$-Pareto processes}
In this section, we finally discuss the approach of  \citet{dFD18} that extends the construction of $r$-Pareto processes and convergence to such processes to a general extreme value index $\xi\in\mathbb{R}$ (positive, null or negative). In their work, the extreme value index $\xi$ is assumed constant over the domain $S$ (unlike in Section~\ref{sec:gpp}), the risk functional is not necessarily homogeneous, and a generalization of the condition of functional regular variation is used. The random field $X$ is assumed to satisfy the generalized regular-variation condition $X\in\mathrm{GRV}(\xi,a_n,b_n,\nu)$, which means that  the following two properties hold: the $1$-dimensional margins satisfy, for all $s\in S$,
\[
\lim_{n\to\infty} n\Pr\left(\frac{X(s)-b_n(s)}{a_n(s)}>x \right)=(1+\xi x)_+^{-1/\xi},
\]
 where $x_+=\max(x,0)$ denotes the positive part of a scalar $x$. If furthermore the appropriately normalized process satisfies the functional convergence 
\[
\lim_{n \to\infty} n\Pr\left(\Big(1+\xi\frac{X-b_n}{a_n}\Big)_+^{1/\xi} \in \ \cdot\  \right)=\nu(\cdot)\quad \mbox{in $C^+(S)\setminus\{0\}$}.
\]
We use the  usual convention $(1+\xi z)^{1/\xi}=\exp(z)$ when $\xi=0$.

These assumptions allow for the existence of a limit for the process $(X-b_n)/a_n$ conditioned on exceedances of the risk $r\big((X-b_n)/a_n\big)\geq u$.
Here the risk is evaluated on the normalized process, which is somewhat unnatural since it is not the scale of observations, and the normalizing functions are \emph{a priori} unknown. In order to push the analysis further, convenient and mild assumptions are stated in terms of the asymptotic proportionality of the normalizing function $a_n(s)\sim   \bar a_n A(s)$ and of the associated risk, $r(a_n)\sim  \bar a_n$ as $n\to\infty$. Under such considerations,  \citet[Theorem 1]{dFD22} obtain 
\[
 \left\lfloor \frac{X-b_n}{a_n}\right\rfloor \ \Big| \ r\Big(\frac{X-b_n}{r(a_n)}\Big)> u  \stackrel{d}\longrightarrow Z, \quad n\rightarrow \infty,
\]
where $Z$ is called the \emph{generalized $r$-Pareto process} and $\lfloor\cdot\rfloor$ denotes a truncation operation handling possible small values. We refer to  the original paper for a precise statement; here, we rather focus on the structure of the limit, which is important for modeling purposes.

\begin{definition}[\citealt{dFD22}]
Let $\xi\in\mathbb{R}$, $a>0$ and $b$ be continuous functions on $S$ and  $\nu$ be a $-1$-homogeneous measure on $C^+(S)\setminus\{0\}$. Assume that the risk functional $r:C(S)\to\mathbb{R}$ is valid in the sense that
\[
\mathcal{A}=\left\{f\in C^+(S)\colon r\left( A\frac{f^\xi-1}{\xi}\right)\geq 0\right\}
\]
satisfies $\nu(\mathcal{A})\in (0,\infty)$, where $A=a/r(a)$.
The associated generalized $r$-Pareto process is 
\[
Z(s)=a(s)\frac{Y(s)^\xi -1}{\xi}+b(s),\quad s\in S,
\]
where $Y$ is the stochastic process on $\mathcal{A}$ 
with distribution $\nu(\cdot \cap \mathcal{A})/\nu(\mathcal{A})$.
\end{definition}

If we assume that for some $s_0\in S$, $u_0>0$, the following implication,
\[
r((f-b)/r(a))> 0 \ \Rightarrow r((tf-b)/r(a))> 0, 
\]
holds for all $t>1$ and  $f$ such that $f(s_0)\geq u_0$, then the marginal distribution of $Z$ at $s_0$ satisfies 
\[
\Pr\left(Z(s_0)>u\mid Z(s_0)>u_0 \right)=\left(1+\xi \frac{u-u_0}{\sigma(s_0)} \right)^{-1/\xi},\quad u>u_0,
\]
with scale parameter $\sigma(s_0)=a(s_0)+\xi(u-b(s_0))$, \text{i.e.}, the tail of $Z(s_0)$ is of generalized Pareto type.

\section{Models, Simulation and Statistical Inference}\label{sec:Models-etc}

Generalized $r$-Pareto processes and their various subclasses described above can be used as statistical models for spatial extreme-event episodes and can serve different purposes. It is possible to estimate model parameters to gain better knowledge on the behavior of joint spatial extremes, for example spatial range parameters characterizing spatial extent and coefficients $\theta$ characterizing occurrence frequency of risk exceedances in terms of the functional $r$. Another goal can be to estimate probabilities of various joint extreme events. Furthermore, models can be used for stochastic simulation of  new extreme-event episodes, potentially much more extreme than those available in the data. For example, stochastic extreme-weather generators can produce weather input data for models that are used to assess extreme-event impacts in risk assessment studies.

\subsection{Pareto process models}

\index{Brown--Resnick model}

Many commonly used models for spatial extremes are most naturally formulated in terms of the constructive representation of max-stable processes through the spectral construction \eqref{eq:spectral-construction}, from which we can derive representations for $r$-Pareto processes for practically relevant choices of the risk functional $r$. For processes of Brown--Resnick type \citep{KSdH09} already discussed in Examples~\ref{ex:BR} and \ref{ex:risk-at-one-location}, the dependence structure is fully characterized by the variogram function of the Gaussian process $G_1$, given by $\gamma(s_1,s_2)=\E[((G_1(s_1)-G_1(s_2))^2]$ for  any two locations $s_1,s_2\in S$; for stationary and isotropic models of the variogram function, it depends only on the Euclidean distance $h=\|s_1-s_2\|$. The Pareto processes themselves are always nonstationary since they are defined on a fixed domain $S$ using a risk functional whose values also depend on $S$, and in some cases even on specific fixed locations, \emph{e.g.}, on $s_0\in S$ in $r(f)=f(s_0)$.    

Parametric models can be obtained by choosing a parametric variogram function, such as the classical choice of using a power variogram $\gamma(h) = (h/\beta)^\alpha$ with scale $\beta>0$ and shape $\alpha\in(0,2)$. The power variogram is unbounded, and as a consequence the bivariate extremal coefficients, given by the normalizing constant $\theta(h)=\Phi(\sqrt{\gamma(h)/2})$ if $\mathcal{S}=\{s_1,s_2\}$ and $r=\max$, can take any value in $[1,2)$, with $1$ indicating perfect asymptotic dependence and $2$ asymptotic independence. The Gaussian process $G_1$ in \eqref{eq:spectral-construction} associated with the power variogram is the two-dimensional fractional Brownian motion, \emph{i.e.}, a Gaussian process with stationary increments. For  variogram functions with a finite upper bound, such as $\gamma(h)=2(\rho(0)-\rho(h))$ corresponding to a stationary Gaussian process $G_1$ with mean zero and covariance function $\rho$, the extremal coefficient $\theta(h)$ remains bounded away from $2$ as $h\rightarrow\infty$. 

Another flexible class of models are elliptical $r$-Pareto processes \citep{TO15}, resulting from setting $Y_i = (G_i)_+^\alpha/\E[(G_i)_+^\alpha]$ in the spectral construction \eqref{eq:spectral-construction} of the corresponding max-stable processes known as  \emph{extremal-$t$ processes} \citep{O2013}. In these processes, a positive probability mass  $\Pr(Z(s)=0)>0$ arises in the simple Pareto process $Z$ (and therefore also at the lower marginal endpoints of the corresponding generalized Pareto processes) and must be taken into account in inference procedures \citep{TO15}.  

Finally, the construction principle described in Example~\ref{ex:construction-with-normalized-u} is very general and flexible since a large variety of processes $U$ could be used in the construction, but it has so far not been widely applied in practice. Often, already existing max-stable models were used as the starting point for obtaining $r$-Pareto models. 


\subsection{Simulation of Pareto processes}\label{sec:simulation}

This section provides some algorithms, R code and figures for the simulation of Pareto processes. First, we present general methodology with a few theoretical results and  R code (using the \texttt{geoR} package \citep{geoR} for the simulation of spatial Gaussian fields), and we then introduce the recent \verb!mev! package \citep{B2023} that provides built-in functions for some of the most popular models. More simulations can be found in the R Markdown script in the supplemental material to this chapter.

The methods by which Pareto processes and the variants discussed above can be simulated depend on the choice of the risk functional $r$ and the structure of the measure $\nu$. 
Recall that the probability distribution of the $r$-Pareto process  for a given measure $\nu$ as introduced in \eqref{eq:measure-nu} is 
\[
 \frac{\nu(\, \cdot\, \cap A_1)}{\nu(A_1)} \quad\mbox{with}\quad A_{1} = \{f \in C^+(S) : r(f) \geq 1\},
\]
\emph{i.e.}, it is the measure $\nu$ truncated to keep only $r$-exceedances above threshold $1$ and renormalized into a probability measure. 

For some specific combinations of $\nu$ and $r$, the distribution of the Pareto process corresponds to a well-known distribution that is easy to simulate from. For example, in the Brown--Resnick and extremal-$t$ models with risk function $r(f) = f(s_0)$ for a fixed location $s_0\in S$, the spectral function $Y$ in the $r$-Pareto process $Z=RY$ is a log-Gaussian or (marginally tranformed) Student's $t$ process, respectively; see Example~\ref{ex:risk-at-one-location} above and \citet{EMKS15,TO15}. Then, simulation can be performed using standard tools for Gaussian process simulation, for example with the \texttt{geoR} package as in the following R code.

\begin{verbatim}
grid = as.matrix(expand.grid(1:20, 1:20))
s0 = 110 # Index of conditioning location
distmat = as.matrix(dist(grid)) # Matrix of distances
# Define the power variogram
vario = function(h,beta=1,alpha=1.5){(h/beta)^alpha}
gamma = vario(distmat) # Variogram matrix
# Simulation of a Gaussian random field:
set.seed(123)
G = geoR::grf(grid = grid, cov.pars = c(1, 1.5),
              cov.model="power")$data
# Compute the corresponding spectral process:
Y = exp(G-G[s0]-gamma[,s0])
# Simulation of a standard Pareto variable:
R = evd::rgpd(n = 1, loc = 1, scale = 1, shape = 1)
# Construct the simple Pareto process:
Z = R*Y 
\end{verbatim}

\begin{figure}[htb]
\begin{center}
\includegraphics[scale=0.35]{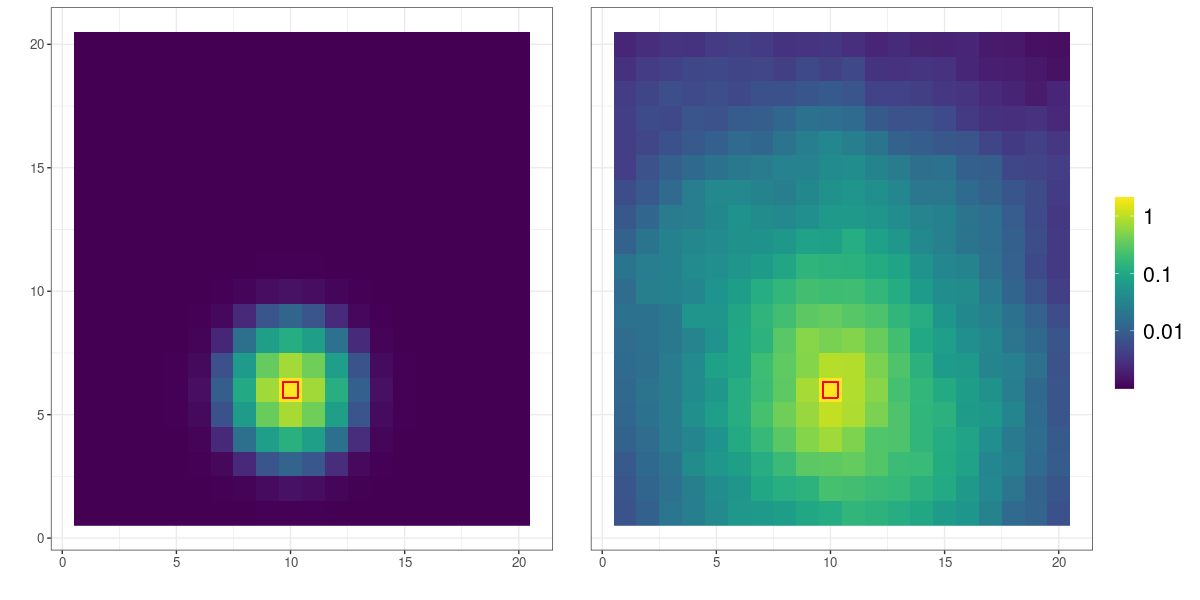}
\end{center}
\caption[BR site]{One realization on logarithmic scale of an $r$-Pareto process (\emph{i.e.}, with $\mu(s)=0$, $\sigma(s)=1$ and $\gamma(s)=0$ in the generalized $r$-Pareto process)  with $r(f)=f(s_0)$, using a Brown-Resnick model and a power variogram defined as $\gamma(h)=(h/\beta)^\alpha$. Left panel: $\beta=1$, $\alpha=1.5$. Right panel: $\beta=1.5$, $\alpha=0.8$. The red square highlights the conditioning site $s_0$.}
\label{fig::BR_site}
\end{figure}

Elaborating on this simple example, one can derive the distribution for simulation of the $r$-Pareto process associated with risk functionals of the more general form
\begin{equation}\label{eq:risk-l1}
r(f)=\sum_{k=1}^K \pi_k f(s_k)\quad \mbox{or}\quad r(f)=\int_S  f(s) \,\pi(s)\mathrm{d} s,
\end{equation}
where $\pi\geq 0$ is a probability distribution, \emph{i.e.}, $\sum_{k=1}^K \pi_k = 1$ or $\int_S \pi(s) \mathrm{d}s = 1$, respectively. 
The following result is closely related to simulation algorithms for Brown-Resnick processes based on the spectral decomposition with respect to the $L^1$-norm, see \citet{DM15} and \citet{DEO16}. 

\begin{proposition}\label{prop:simu-l1}
In the setting of Theorem~\ref{thm:Dombry-Ribatet}, assume that $X\in\mathrm{RV}_\alpha(C^+(S),\sigma)$ with $\alpha=1$. Denote by $Z^{(s)}=RY^{(s)}$ the Pareto process associated with the risk functional $r_s(f)=f(s)$ and, similarly, let   $Z^{(r)}=RY^{(r)}$ be the Pareto process associated with the risk functional $r$ defined by the convex combination in  \eqref{eq:risk-l1}. Then, 
\[
Y^{(r)}\stackrel{d}=\frac{Y^{(\tau)}}{r(Y^{(\tau)})}
\]
with $\tau\sim \tilde\pi$ representing a random sampling point with distribution
\[
(\tilde\pi_k)_{1\leq k\leq K}=\left(\frac{\pi_k\int f(s_k)\sigma(\mathrm{d}f)}{\int r(f)\sigma(\mathrm{d}f)}\right)_{1\leq k\leq K}
\]
or
\[
(\tilde\pi(s))_{s\in S}=\left( \frac{\pi(s)\int f(s)\sigma(\mathrm{d}f)}{\int r(f)\sigma(\mathrm{d}f)}\right)_{s\in S},
\]
in the discrete and continuous case, respectively.
\end{proposition}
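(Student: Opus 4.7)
The plan is to verify the identity in distribution by comparing the explicit spectral density formulas provided by Theorem~\ref{thm:Dombry-Ribatet} (with $\alpha=1$). First, I would write out, for any bounded measurable test functional $\phi$ on $C^+(S)$,
\[
\E[\phi(Y^{(r)})]=\frac{\int r(f)\,\phi\bigl(f/r(f)\bigr)\,\sigma(\mathrm{d}f)}{\int r(f)\,\sigma(\mathrm{d}f)},\qquad \E[\phi(Y^{(s)})]=\frac{\int f(s)\,\phi\bigl(f/f(s)\bigr)\,\sigma(\mathrm{d}f)}{\int f(s)\,\sigma(\mathrm{d}f)},
\]
which is exactly \eqref{eq:r-pp-spectral} specialized to each risk functional.

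The second step is the crucial observation that $r$ is a linear (in fact, convex) combination of point evaluations, hence $1$-homogeneous \emph{and additive}. This means that if we start from $f\in C^+(S)$ and form $g=f/f(s_k)$, then $r(g)=r(f)/f(s_k)$, so $g/r(g)=f/r(f)$. In particular, for each atom $s_k$ we have $Y^{(s_k)}/r(Y^{(s_k)})\stackrel{d}{=}$ the distribution obtained by sampling $f\sim \sigma$ with size-biasing by $f(s_k)$ and returning $f/r(f)$.

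Then I would compute $\E[\phi(Y^{(\tau)}/r(Y^{(\tau)}))]$ by conditioning on $\tau$ and plugging in the explicit weights $\tilde\pi_k$:
\[
\E\bigl[\phi(Y^{(\tau)}/r(Y^{(\tau)}))\bigr]=\sum_{k=1}^K \frac{\pi_k\int f(s_k)\sigma(\mathrm{d}f)}{\int r(f)\sigma(\mathrm{d}f)}\cdot\frac{\int f(s_k)\,\phi(f/r(f))\,\sigma(\mathrm{d}f)}{\int f(s_k)\sigma(\mathrm{d}f)}.
\]
The normalizing factors $\int f(s_k)\sigma(\mathrm{d}f)$ cancel. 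Exchanging sum and integral and using $\sum_k \pi_k f(s_k)=r(f)$ collapses the numerator to $\int r(f)\phi(f/r(f))\sigma(\mathrm{d}f)$, which is precisely the numerator in the formula for $\E[\phi(Y^{(r)})]$. Since this holds for all $\phi$, the equality in distribution follows. The continuous case is identical after replacing $\sum_k \pi_k(\cdot)$ by $\int_S \pi(s)(\cdot)\mathrm{d}s$ and invoking Fubini.

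I do not anticipate any genuine obstacle: the proof is essentially a size-biasing identity, and the main care needed is to verify integrability so that the Fubini/exchange of sum and integral is legitimate, and to check that the denominators $\int f(s_k)\sigma(\mathrm{d}f)$ are strictly positive (otherwise the corresponding $\tilde\pi_k$ is zero and the term drops out harmlessly). The whole argument hinges on the linearity of $r$ together with the $\alpha=1$ weighting in the spectral formula, which is why the proposition is stated only for $\alpha=1$ and for convex combinations of point evaluations.
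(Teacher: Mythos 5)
Your proof is correct and is essentially the same argument as the paper's: both rest on the spectral formula \eqref{eq:r-pp-spectral}, the $1$-homogeneity of $r$ (so that $Y^{(s_k)}/r(Y^{(s_k)})$ has the law of $f/r(f)$ under the $f(s_k)$-size-biased spectral measure), and the linearity $\sum_k \pi_k f(s_k)=r(f)$ to collapse the mixture. The only difference is direction — the paper decomposes $\Pr(Y^{(r)}\in\cdot)$ into the mixture while you assemble the mixture and recover $\Pr(Y^{(r)}\in\cdot)$ — which is immaterial.
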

\begin{proof} We outline the proof of Proposition~\ref{prop:simu-l1} for the discrete case. According to Equation~\eqref{eq:r-pp-spectral}, the spectral measure associated with the risk functional $r_s$ is
\[
\Pr(Y^{(s)}\in \cdot\,)= \frac{\int f(s) 1_{\{f/f(s)\in \,\cdot\,\}}\sigma(\mathrm{d}f)}{\int f(s) \sigma(\mathrm{d}f)},
\]
and the spectral measure associated with the risk functional $r$ is
 \[
\Pr(Y^{(r)}\in \cdot)= \frac{\int r(f) 1_{\{f/r(f)\in \,\cdot\,\}}\sigma(\mathrm{d}f)}{\int r(f) \sigma(\mathrm{d}f)}.
\]
In the discrete case, we have $r(f)=\sum_{k=1}^K \pi_k f(s_k)$, so that
\begin{align*}
\Pr(Y^{(r)}\in \,\cdot\,)&= \frac{\sum_{k=1}^K \pi_k\int f(s) 1_{\{f/r(f)\in \,\cdot\,\}}\sigma(\mathrm{d}f)}{\int r(f) \sigma(\mathrm{d}f)}  \\
&= \sum_{k=1}^K \frac{\pi_k\int f(s_k)}{\int r(f) \sigma(\mathrm{d}f)} \Pr\left(Y^{(s_k)}/r(Y^{(s_k)})\in \,\cdot\,\right)\\
&= \sum_{k=1}^K \tilde\pi_k \Pr\left(Y^{(s_k)}/r(Y^{(s_k)})\in \,\cdot\,\right).
\end{align*}
Therefore, the distribution of $Y^{(r)}$ is a mixture of the distributions of $Y^{(s_k)}/r(Y^{(s_k)})$, $k=1,\ldots, K$, with mixture distribution $\tilde\pi_k$, $k=1,\ldots, K$, which provides the representation with a random sampling point $\tau$. 
\end{proof}

Proposition~\ref{prop:simu-l1} allows simulation in the case of linear risk functionals, provided that simulation according to $Y^{(s)}$ is feasible. The following code continues the preceding example with the simulation of the Pareto process associated with the risk functional $r(f)=K^{-1}\sum_{k=1}^K f(s_k)$, where $S=\{s_1,\ldots,s_K\}$ denotes the simulation grid. Since we use a stationary variogram function, $\tilde{\pi}_k=1/K$ for all $k=1,\ldots,K$, \emph{i.e.}, the sampling weight is the same for all locations.    

\begin{verbatim}
# Randomly choose conditioning location uniformly:
tau = sample(1:nrow(grid),1)
# Simulation of a Gaussian random field:
G = geoR::grf(grid = grid, cov.pars = c(1, 1.5),
              cov.model="power")$data
# Compute the corresponding spectral process:
Y = exp(G-G[tau]-gamma[,tau])
# Simulation of a standard Pareto variable:
R = evd::rgpd(n = 1, loc = 1, scale = 1, shape = 1)
# Construct the simple Pareto process:
Z = R*Y/sum(Y)
\end{verbatim}

One can further enlarge the class of risk functionals for which simulation is feasible by  rejection sampling in cases where it is relatively easy to sample according to a specific risk functional but one wants to generate samples according to another risk functional. This is the purpose of the next result and requires a domination condition of the risk.

\begin{proposition}\label{prop:simu-rejection}
Let $r_1$ and $r_2$ be two $1$-homogeneous risk functionals such that $r_2\leq M r_1$ with a positive and finite constant $M$.  If simulation of the Pareto process $Z^{(r_1)}$ is feasible, then $Z^{(r_2)}$ can be simulated by drawing independent copies of $Z^{(r_1)}$ until $r_2( Z^{(r_1)})\geq M$ and then setting $Z^{(r_2)}=M^{-1}Z^{(r_1)}$.
\end{proposition}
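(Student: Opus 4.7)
The plan is to verify directly that the output of the proposed rejection algorithm has the distribution of $Z^{(r_2)}$, using the representation $\Pr(Z^{(r_i)}\in\,\cdot\,)=\nu(\,\cdot\,\cap A_i)/\nu(A_i)$ with $A_i=\{f\in C^+(S)\colon r_i(f)>1\}$ provided by Theorem~\ref{thm:Dombry-Ribatet}. The two main ingredients will be the $-\alpha$-homogeneity of the exponent measure $\nu$ and the $1$-homogeneity of the two risk functionals.

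First I would exploit the domination $r_2\leq Mr_1$ to obtain the key set inclusion $M\cdot A_2\subseteq A_1$: indeed, for $f\in A_2$ one has $r_1(Mf)=Mr_1(f)\geq r_2(f)>1$. Using the $1$-homogeneity of $r_2$, the acceptance event rewrites as $\{r_2(Z^{(r_1)})\geq M\}=\{Z^{(r_1)}\in M\cdot A_2\}$, and for any measurable set $B$ I would then compute
\[
\Pr\bigl(M^{-1}Z^{(r_1)}\in B,\,r_2(Z^{(r_1)})\geq M\bigr)=\Pr\bigl(Z^{(r_1)}\in M(B\cap A_2)\bigr)=\frac{M^{-\alpha}\,\nu(B\cap A_2)}{\nu(A_1)},
\]
where the second equality uses $M(B\cap A_2)\subseteq M\cdot A_2\subseteq A_1$ together with the $-\alpha$-homogeneity of $\nu$. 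Specializing to $B=C^+(S)$ identifies the acceptance probability as $M^{-\alpha}\nu(A_2)/\nu(A_1)\in (0,1]$, and dividing yields the conditional law
\[
\Pr\bigl(M^{-1}Z^{(r_1)}\in B\,\big|\,r_2(Z^{(r_1)})\geq M\bigr)=\frac{\nu(B\cap A_2)}{\nu(A_2)}=\Pr(Z^{(r_2)}\in B),
\]
so each accepted draw has the correct distribution, and the number of proposals is geometrically distributed and therefore almost surely finite.

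I do not anticipate a serious obstacle, since the argument reduces to a short homogeneity manipulation once the inclusion $M\cdot A_2\subseteq A_1$ is in hand. The only delicate point is the bookkeeping ensuring that the factor $M^{-\alpha}$ cancels between the joint probability and the acceptance probability, so that the scheme reproduces the $\alpha$-independent ratio $\nu(B\cap A_2)/\nu(A_2)$ characterizing $Z^{(r_2)}$.
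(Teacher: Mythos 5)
Your proof is correct and follows essentially the same route as the paper: the key set inclusion $MA_2\subseteq A_1$ derived from $1$-homogeneity and the domination $r_2\leq Mr_1$, followed by the $(-\alpha)$-homogeneity of $\nu$ to identify the law of the accepted, rescaled draw. Your version merely makes the final conditioning step more explicit (computing the joint probability and dividing by the acceptance probability), where the paper phrases it as ``conditioning $\nu$ further on $A_2\subset M^{-1}A_1$''.
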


\begin{proof} We assume it is possible to simulate  $Z^{(r_1)}$ with distribution
\[
\Pr(Z^{(r_1)}\in \,\cdot\,)=\frac{\nu(\,\cdot\,\cap A_1)}{\nu(A_1)}\quad \mbox{with}\quad A_1=\{f\colon r_1(f)\geq 1\},
\]
and the goal is to simulate $Z^{(r_2)}$ with distribution
\[
\Pr(Z^{(r_2)}\in \,\cdot\,)=\frac{\nu(\,\cdot\,\cap A_2)}{\nu(A_2)}\quad \mbox{with}\quad A_2=\{f\colon r_2(f)\geq 1\}.
\]
The homogeneity of the risk functionals, together with the bound $r_2\leq Mr_1$, imply the inclusion $A_2\subset M^{-1} A_1$, because
\[
f\in A_2\ \Rightarrow\ r_2(f)\geq 1 \ \Rightarrow\  Mr_1(f)\geq 1 \ \Rightarrow\ r_1(Mf)\geq 1  \ \Rightarrow\  Mf\in A_1.
\]
Then, the $(-\alpha)$-homogeneity of $\nu$ implies 
\[
\frac{\nu(\,\cdot\,\cap M^{-1}A_1)}{\nu(M^{-1}A_1)}=\frac{M^\alpha\nu((M\,\cdot\,)\cap A_1) }{M^\alpha\nu(A_1)}=\Pr(M^{-1}Z^{(r_1)}\in\,\cdot\, ).
\]
This shows that $M^{-1}Z^{(r_1)}$ has the same distribution as ``$\nu$ conditioned on $M^{-1}A_1$'' (with a slight abuse of language because $\nu$ is an infinite measure); conditioning further on $A_2\subset M^{-1}A_1$ yields the distribution of $Z^{(r_2)}$ and justifies the use of rejection sampling.
\end{proof}

It is often useful to  apply Proposition~\ref{prop:simu-rejection} by choosing $r_1=\frac{1}{|S|}\sum_{s\in S} f(s)$ with  $S$ a finite simulation grid with cardinality $|S|$. If it is possible to simulate according to $r(f)=f(s)$, then, according to Proposition~\ref{prop:simu-l1}, simulation of $Z^{(r_1)}$ becomes feasible. Standard $1$-homogeneous risk functionals satisfying $r_2\leq Mr_1$ include the following examples:
\begin{itemize}
\item[-] minimum: $r_2(f)=\min_{s\in S} f(s)\leq r_1(f)$, \emph{i.e.}, $M=1$; 
\item[-] maximum: $r_2(f)=\max_{s\in S} f(s)\leq |S|\, r_1(f)$, \emph{i.e.}, $M=|S|$;
\item[-] any order statistics $r_2(f)$ of the sample $\{f(s)\}_{s\in S}$, including the median, where we can set $M=|S|$ by analogy with the maximum;
\item[-] $l^p$-norm: $r_2(f)=(\sum_{s\in S} f(s)^p)^{1/p}\leq Mr_1(f)$ with $M=1$ if $p\in (0,1)$ and $M=|S|^{p-1}$ if $p>1
$.
\end{itemize}

The following code continues the previous examples with the simulation of the Pareto process associated with the maximum risk functional $r(f)=\max_{s\in S} f(s)$, using the rejection method.

\begin{verbatim}
Z1 = rep(0,nrow(grid))
while(max(Z1)<nrow(grid)){
  s0 = sample(1:nrow(grid),1)
  G = geoR::grf(grid = grid, cov.pars = c(1, 1.5),
                cov.model="power")$data)
  Y = exp(G-G[s0]-gamma[,s0])
  R = evd::rgpd(n = 1, loc = 1, scale = 1, shape = 1)
  Z1 = R*Y/mean(Y)
}
Z2 = Z1/nrow(grid)
\end{verbatim}

The R package \texttt{mev} \citep{B2023} allows for simulation of $r$-Pareto processes and generalized $r$-Pareto processes, with the functions \texttt{rparp} and \texttt{rgparp} respectively. The user can specify the risk functional with the argument \texttt{riskf}, where currently available risks includes maxima, minima, geometric average, location-conditioning value and $L^2$ norm, and a specific dependence model using the argument \texttt{model} (\emph{e.g.}, ``br" for the Brown-Resnick model) along with a user-defined semivariogram (argument \texttt{vario}) or a parameter matrix (argument \texttt{sigma}).

Figure \ref{fig::BR} depicts a simulated example of a Brown-Resnick model with power variogram and risk functional $r=\max$, using the following code. Visually, a clear difference with Figure~\ref{fig::BR_site} is that the spatial maximum is not located near a fixed conditioning location but can arise at any location of the simulation grid with the same probability. 

\begin{verbatim}
grid = as.matrix(expand.grid(1:20, 1:20))
nsim = 5 # Number of simulated observations
sim.max = mev::rparp(n=nsim, riskf = 'max', vario=vario,
    coord=grid, model='br')
\end{verbatim}

\begin{figure}[htb]
\begin{center}
\includegraphics[scale=0.35]{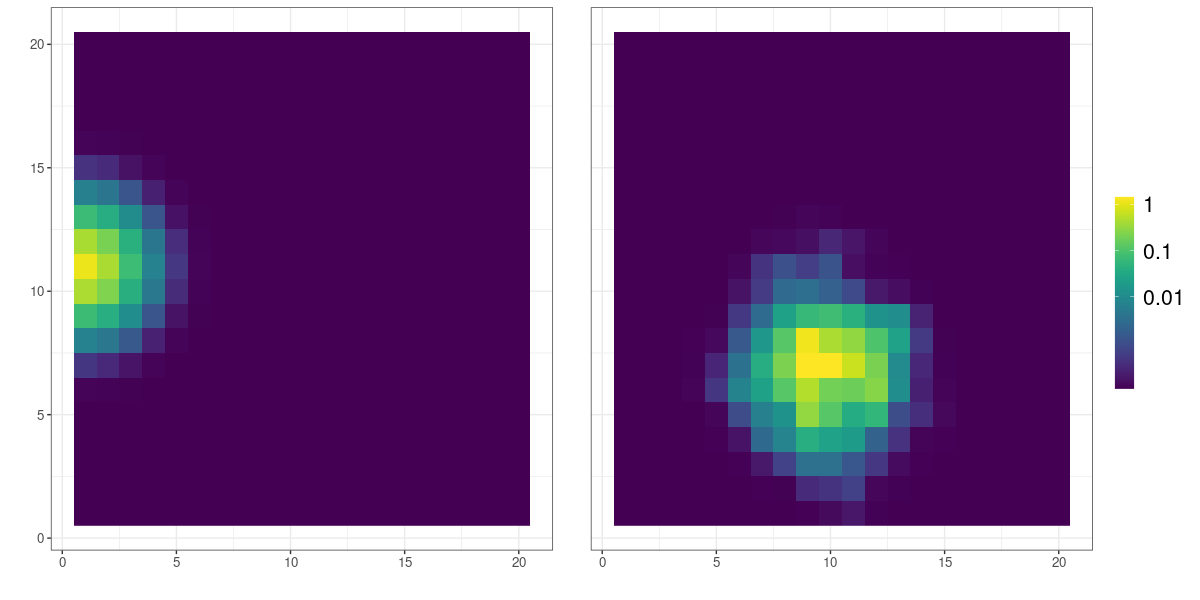}
\end{center}
\caption[BR mev]{One realization on logarithmic scale (see caption of Figure~\ref{fig::BR_site}) of an $r$-Pareto process with $r(X)=\max_{s\in S} f(s)$, using a Brown-Resnick model and a power variogram defined as $\gamma(h)= (h/\beta)^\alpha$ with $\beta=1$, $\alpha=1.5$. Left panel: using the procedure described in Proposition \ref{prop:simu-rejection}. Right panel: using the \texttt{R} package \texttt{mev}, also based on rejection sampling.}
\label{fig::BR}
\end{figure}

\bigskip


Finally, we outline the idea of directly resampling the extreme-risk events in a dataset at hand. It is possible to simulate a new sample of $r$-Pareto processes $R_kY^{(r)}_{k}$, $k=1,\ldots,K$, with potentially very large sample size $K$ and much higher risk than observed in the data, through resampling procedures that do not require any parametric assumptions and statistical inferences on the dependence structure. One proceeds by drawing new variables $R_k\sim \mathrm{Pareto}(1)$, whereas  empirical spectral processes $Y^{(r)}_{j}$, $j=1,\ldots,m$, are extracted from the data. The principle of this approach was described by \citet{FdH14}. \citet{PTCO20} implement it for ``lifting" extreme precipitation episodes to higher risk levels on a spatiotemporal domain $S$, and \citet{OAM21} further apply nonparametric spatial resampling techniques to obtain spectral processes $Y^{r}_{k}$ different from the observed ones but possessing similar spatial patterns.


\subsection{Statistical inference for Pareto processes}
\label{sec:stat-inference-pareto-processes}


We consider a setting with i.i.d. processes $X_i$ and data  $X_i(s_j)$  available  for times $i=1,\ldots,n$ and locations $s_j$, $j=1,\ldots, D$, and write $\mathbf X_i = (X_i(s_1),\ldots,X_i(s_D))^T$ for the data vector at time $i$. The following methods can be adapted to settings with time-varying data availability at locations, for example in partially missing data,  but to keep notation concise we will not address such extensions. 

We set $S_{\text{data}}=\{s_1,\ldots,s_D\}$ for statistical inference, such that the risk functional $r_{\text{data}}$ can be assessed for data, where $r_{\text{data}}(\mathbf{z})=\max_{j=1}^D z_j$ for the classical (generalized) Pareto process. Due to the  equivalence of the existence of  limits for various risk functionals $r$ discussed in \S~\ref{sec:theory-pareto-processes}, it is feasible to first estimate a model using a  specific  combination of $S_{\text{data}}$ and $r_{\text{data}}$ and then conduct simulation and prediction with different $S$ and $r$. For example, $S_{\text{data}}$ can contain irregularly spaced observation stations, whereas $\mathcal{S}$ can be a fine regular spatial pixel grid covering the study area. Nevertheless, $S_{\text{data}}$, $r_{\text{data}}$,  $S$ and $r$ must be chosen carefully when we  use asymptotic representations with finite-sample data, especially when estimated models  can suffer from certain biases that could be further amplified after switching to different $S$ and $r$. 

Generalized $r$-Pareto processes and their subclasses can be constructed and estimated by assuming equality of the left- and right-hand side in one of the limits  \eqref{eq:conv-pp}, \eqref{eq:conv-gpp} and \eqref{eq:conv-rpp}, for fixed $r$ or $u$, respectively. To fix an appropriate threshold $u$ for $r_{\text{data}}(\mathbf X_i)$, several aspects should be taken into account: the exceedance region $\{\mathbf z: r(\mathbf z)>u\}$ should include the extreme events for which we seek statistical predictions; the number of replicates $\bm X_i$ satisfying $r_{\text{data}}(\bm X_i)>u$ should be large enough to keep estimation uncertainties moderate; the POT-stability should be checked -- there should be no stochastic dependence between $R_i=r_{\text{data}}(\bm X_i)$ and $\bm X_i/r_{\text{data}}(\bm X_i)$, and known distributional properties of $R_i$ (\emph{e.g.}, being standard Pareto in the case of $r$-Pareto processes), should be verified. In many real-data applications, POT-stability is not well achieved in available data at observed levels of extremenes.
Nevertheless, the class of  asymptotic POT-stable models, \emph{i.e.} generalized $r$-Pareto processes, can provide useful predictions and a good working dependence model in practice, although estimations of joint-tail probabilities with strong extrapolation beyond the observed range of data may be biased.  

\subsubsection{Handling marginal distributions}

Subsequently, we focus on inference for $r$-Pareto processes where data are available on the standardized marginal scale, \emph{i.e.}, with marginal standard Pareto distributions as defined in \eqref{eq:standard-Pareto-margins}. The assumption of temporal stationarity of processes $X_i$ may require further data preprocessing or subsetting. Marginal distributions of data are allowed to be nonstationary and are usually not known beforehand, and a first modeling step is necessary to appropriately specify them and transform data to the standardized marginal scale.  In the case where generalized $r$-Pareto processes are used, the parameter fields $\mu(s)$, $\sigma(s)$ and $\xi(s)$ must be estimated \citep{PTCO20,dFD22}.  

A standard approach for estimating marginal distributions, motivated by univariate asymptotic theory, consists of using a Generalized Pareto (GP)  distribution  for excesses $(X_i(s_j)-u_j) \mid (X_i(s_j) >u_j)$ above an appropriately chosen high marginal quantile $u_j$, $j=1,\ldots,D$. With risk functionals $r_{\text{data}}$ for which the conditioning event in the relevant limit among \eqref{eq:conv-pp}, \eqref{eq:conv-gpp},  \eqref{eq:conv-rpp} includes values below one of the marginal thresholds, we also need to specify the marginal distributions below the thresholds $u_j$. The empirical distribution function of $X_i(s_j)$, $i=1,\ldots,n$, can be used below the marginal threshold  \citep{Coles1991}. 
When focus is primarily on estimating the dependence model of the $r$-Pareto process, often the empirical distribution functions are used for both body and tail of marginal distributions, without specifying a GP distribution for the tail.

\subsubsection{Maximum likelihood}

With a slight abuse of notation, we write $X_i(s_j)$ for the marginally standardized data, \emph{i.e.}, possessing standard Pareto distribution $\Pr(X_i(s_j)>u)=1/u$, $u>1$; other choices such as the unit Fr\'echet distribution are also possible since they lead to the same limits in \eqref{eq:conv-pp} and \eqref{eq:conv-rpp}.  Inference approaches for parametric dependence models can resort to maximization of likelihood variants \citep{TO15,dFD18}, or of other objective functions based on the probability density, such as gradient scores \citep{dFD18, K2024}. The latter approach bypasses the  numerical calculation of the $r$-extremal coefficient $\theta$, which is often challenging when $\theta$ depends on the model parameters. 

We fix a risk functional $r_{\text{data}}:\mathbb{R}_+^D\rightarrow [0,\infty)$ and a threshold $u>0$ to  identify the extreme events, which satisfy $r_{\text{data}}(\mathbf X_i) > u$, and we denote by $n_u$ the number of extreme events and by $i_1,\ldots,i_{n_u}$ their time indices. The realizations of the $r$-Pareto process are given by $\mathbf Z_{i_k} = \mathbf X_{i_k}/u$, $k=1,\ldots,n_u$.

Consider the intensity function  
\begin{equation}\label{eq:intensity-function}
\lambda_{\text{data}}(\mathbf z) = -\frac{\partial^D}{\partial z_1\times \cdots \times z_D} \nu_{\text{data}}([\mathbf{0}, \mathbf{z}]^C)
\end{equation}
of the measure $\nu_{\text{data}}$ defined with respect to $S_{\text{data}}$.
We indicate a parametric model by using its parameter vector $\mathbf{\psi}$ as superscript. The $r$-Pareto log-likelihood function is
$$
\ell(\mathbf{\psi}) = \sum_{i=1}^{n} 1(r(\mathbf{X}_i)>u) \times \log \left(\frac{\lambda_{\text{data}}^{(\mathbf{\psi})}(\mathbf Z_{i})}{\theta^{(\mathbf{\psi})}}\right) = \sum_{k=1}^{n_u} \log \left(\frac{\lambda_{\text{data}}^{(\mathbf{\psi})}(\mathbf Z_{i_k})}{\theta^{(\mathbf{\psi})}}\right),
$$
where $\theta^{(\mathbf{\psi})}=\nu_{\text{data}}\{\mathbf z: r_{\text{data}}(\mathbf z) \geq 1\}$. A computational bottleneck often resides in calculating $\theta^{(\mathbf{\psi})}$ through numerical integration of $\lambda_{\text{data}}^{(\mathbf{\psi})}$, but for some choices of $r_{\text{data}}$ this constant does not depend on $\mathbf{\psi}$, such that fast likelihood computations are possible, \emph{e.g.}, with $\theta=1$ for $r_{\text{data}}(f)=f(s)$ and for $r_{\text{data}}(f)=\sum_{j=1}^D f(s_j)/D$  \citep{Opitz2015,EMKS15,dFD18}. 

\index{partial censoring}

When interest is in  $r_{\text{data}}(f)=\max_{j=1}^D f(s_j)/u_j$ for a threshold vector $(u_1,\ldots,u_D)^T$ and risk threshold $u=1$, partial censoring can be applied, where  components of $\mathbf Z_{i_k}$ that fall below $1$ are considered as censored. Then, partial derivatives in \eqref{eq:intensity-function} are taken only for non-censored components, whereas for censored components $z_j$ is replaced by $u_j$ \citep{TO15,dFD18}. 
Multivariate integrals arising in the log-likelihood function for this approach can be calculated using Quasi-Monte-Carlo techniques \citep{dFD18}. We now provide some likelihood-related formulas pertaining to the Brown--Resnick process.

\begin{example}[Brown--Resnick model]
Various forms of expressions for $\lambda_{\text{data}}$ and related quantities were derived for the Brown--Resnick model \citep{Ribatet2013,Wadsworth2014,EMKS15,dFD18}. As in  \cite{dFD18}, a pivotal role in the following formulas is taken by the first component $s_1$, but the roles of the different locations $s_1,\ldots,s_D$ are indeed exchangeable. First, define a quadratic matrix with $D-1$ rows and columns, 
$$
\Sigma_{\text{data}}^{(\mathbf{\psi})} = \left(\gamma_{i,1}+\gamma_{1,j}-\gamma_{i,j}\right)_{2\leq i,j \leq D}, \quad \gamma_{i,j}=\gamma(s_i,s_j),\ 1\leq i,j\leq D,
$$
where $\gamma$ is the semi-variogram  characterizing the dependence structure. 
With $\tilde{\mathbf{z}} = \log(z_i/z_1)+\gamma_{i,1}$, $j=2,\ldots,D$, we obtain the intensity function
$$
\lambda_{\text{data}}^{(\mathbf{\psi})}(\mathbf z) = \frac{\left|\Sigma_{\text{data}}^{(\mathbf{\psi})}\right|^{-1/2}}{(2\pi)^{(D-1)/2} z_1^2z_2\times \ldots \times z_D }\exp\left(-\frac{1}{2} \tilde{\mathbf{z}}^T \left(\Sigma_{\text{data}}^{(\mathbf{\psi})}\right)^{-1} \tilde{\mathbf z}\right), \quad \mathbf{z} \geq \mathbf{0}.
$$

When we use $r_{\text{data}}(f)=f(s_1)$ for the risk functional, the $r$-extremal coefficient is $\theta=1$ and the probability density of $R$ is $1/r^2$, $r>1$. 
Then, the probability density of the log-spectral process $\mathbf Y=\mathbf Z/R$ (where $R=Z_1$) is 
$
y_1^2\lambda_{\text{data}}^{(\mathbf{\psi})}(\bm y), 
$
and for the Brown--Resnick model this expression corresponds to a log-Gaussian random vector of dimension $D-1$ with mean vector $(-\gamma_{j,1})_{j=2,\ldots,D}$ and covariance matrix $\Sigma_{\text{data}}^{(\mathbf{\psi})}$, as already stated in Equation~\eqref{eq:Y-s_0}. 
\end{example}

\subsubsection{Gradient scoring}
\label{sec:gradient_scoring}

To bypass the calculation of multivariate integrals which becomes prohibitively costly with increasing dimension,  \citet{dFD18} propose using score matching. The purpose of proper scoring rules usually is to compare the predictive performance of various models. However, the minimization of the score between a parametric model and the dataset can also provide a consistent and asymptotically normal estimator under mild conditions \citep{Dawid2016}. The gradient score is based on the gradient of the log-likelihood and provides an interesting choice for estimating (generalized) $r$-Pareto processes since the normalizing constant $\theta$ cancels out. Moreover, a weighting scheme can be applied that mimics partial censoring by down-weighting the contributions of relatively small values in $r(\mathbf Z_{i_k})$ and in the margins of $\mathbf Z_{i_k}$. This approach requires a differentiable risk functional $r$. To overcome this limitation, smooth approximations can be used; for instance, the $p$-norm with large enough $p$ quite accurately approximates the maximum risk functional; see \S\ref{sec:application} and \cite{dFD18}. For detailed formulas of score matching and expressions for the Brown--Resnick model we refer to \cite{dFD18}; an R implementation is available in the package \texttt{mvPot}.

\subsubsection{Likelihood-free neural Bayes estimators}

Efficient likelihood-free inference methods were  recently  developed using neural Bayes estimators, where a neural network is constructed and trained to predict the unknown parameters by using large amounts of realizations of the parametric model, simulated according to many different parameter configurations, as  input data  \citep{RSZH2023,SRZH2023}. Benefits of such estimators  are that they do not require computing normalizing constants in the likelihood and scale very well to settings with very large numbers of data locations in $S_{\text{data}}$. While the training phase of the neural network could still require substantial computational resources, the estimators are amortized, which means that a trained neural estimator can be used at very low computational cost to estimate the parameters for large numbers of datasets.

\section{Application example}
\label{sec:application}

We give some background on the data used for the  application examples below; it is described in detail in the supplementary material. It consists in significant wave heights here denoted by $H_s$, a quantity related to the energy of the waves and, consequently, to their severity. Data is provided by the hindcast sea-states database ResourceCODE \citep{A2021} and is accessible with the R package \texttt{resourcecode} \citep{R2023}. It is provided on a non-regular grid of more than $300000$ nodes covering the European Atlantic waters, more refined close to the coast. At each location, several sea-state parameters are available at hourly time step from 1994 to 2020. In the following, daily data (taken as the observation at 12~am) from 1995 to 2015 are considered, and only the months of January to March are kept, corresponding to the season during which the most extreme $H_s$ are usually observed. We focus on a study area $S$ near the French coast; see Figure~\ref{fig::MapHsData}. In the end, the dataset comprises $1895$ observations of $H_s$ at $100$ locations.

The inference is performed using the \texttt{R} package \texttt{mvPot} \citep{mvPot}, and we follow the accompanying tutorial \citep{mvPotTutorial}. As presented in \S\ref{sec:stat-inference-pareto-processes}, data are first marginally normalized to a standard Pareto distribution: for each location $s\in S$, we estimate a threshold $u(s)$, defined as the $0.95$ empirical quantile at site $s$. Above this thresold, we fit a GP distribution, using for example the function \texttt{fpot} from the \texttt{R} package \texttt{evd}, 
and below we consider the empirical cumulative distribution function. 

We then model the extremal dependence structure of the marginally normalized data by fitting an $r$-Pareto process associated with the risk functional $r(X)=\max_{s\in S}X(s)$, and
we assume a parametric form for the spectral process $Y$. A common choice is to consider a log-Gaussian model, with power variogram $\gamma(h)=(h/\beta)^\alpha$. 

The estimation procedure is performed using the gradient scoring rule approach of \citet{dFD18} described in \S\ref{sec:gradient_scoring}. Complete reproducible \texttt{R} codes are provided in a supplementary file. The main idea is to estimate the parameters by minimizing  the peaks-over-threshold gradient score function. In the \texttt{mvPot} package, this function is named \texttt{scoreEstimation}. It takes as arguments the observed exceedances \texttt{obs},  \textit{i.e.} the events satisfying $r(\bm X_i)>u$; the matrix of coordinates \texttt{loc}; the variogram model \texttt{vario} (defined as a function of the distance $h$); the weighting function \texttt{weightFun} and its derivative \texttt{dWeightFun}; and the marginal threshold $u$,  that is typically taken as a high quantile of $r(\bm X_i)$. As explained in \S\ref{sec:stat-inference-pareto-processes}, POT-stability can be checked to assess the value of $u$. 

\begin{verbatim}
  scoreEstimation(obs, loc, vario, weightFun,dWeightFun, u)
\end{verbatim}

Finally, to validate the estimated dependence model, we consider the spatial extremogram as a measure of extremal dependence: for any two locations $s_1,s_2\in S$, it is defined as the asymptotic conditional exceedance probability
$$\chi(h)=\lim\limits_{u\to\infty}\chi_u(h)=\lim\limits_{u\to\infty}\Pr(X(s_1)>u\vert X(s_2)>u),$$ where $h=\| s_1-s_2\|$. Within the \texttt{mev} package, the empirical extremogram (for fixed threshold $u$) can be obtained using the function $\texttt{extremo}$ by specifying the data matrix \texttt{dat}, the matrix of coordinates \texttt{coord} and the marginal probability threshold \texttt{margp}:
\begin{verbatim}
extremo(dat=hs_dat, margp=0.95, coord=coordinates, plot=T)
\end{verbatim}

\begin{figure}[htb]
\begin{center}\includegraphics[scale=0.4]{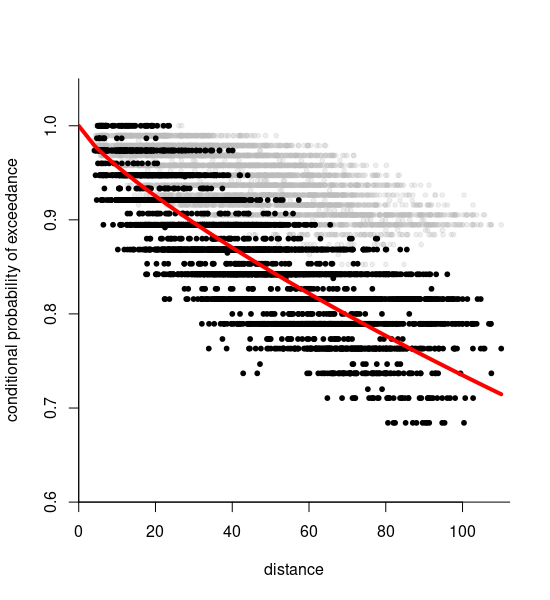}
\end{center}
\caption[Application]{Empirical extremogram $\chi_u$ for $u=0.95$ (light dots) and $u=0.98$ (dark dots), and theoretical extremogram obtained using the estimated parameters (solid line), as functions of the distance.}
\label{fig::app}
\end{figure}

Figure \ref{fig::app} depicts the theoretical values of the extremogram based on the estimated parameters of the dependence model (solid red line), along with the empirical estimates at two different thresholds, as a function of the distance $h$. Looking at the empirical estimates, one can see that for a fixed threshold, even at relatively far distances, the empirical extremogram values remain relatively high and indicate positive extremal dependence. Though,  for higher quantiles, the strength of dependence decreases. This means that a subasymptotic model for spatial extremes, capable to capture dependence strength that decreases relatively strong for more extreme joint events, could be an interesting alternative here.

Finally, given the relatively large amount of data available and potentially nonstationary dependence with respect to distance to the coastline, more sophisticated models than the one illustrated here could be inferred and  prove useful.

\section{Conclusion}

In this chapter, we have presented and illustrated fundamental elements of asymptotic theory, methods and models for generalized $r$-Pareto processes and important subclasses. This approach extends the idea of peaks-over-threshold modeling to multivariate vectors and stochastic processes by allowing for  flexibly choosing a risk functional to characterize joint extreme events as risk exceedances. This spatial modeling framework is still quite recent, and further methodological developments and a more widespread adoption in various applied fields can be expected in the future, especially thanks to its increased flexibility as compared to max-stable processes and its good interpretation in terms of original events and not pointwise maxima. 

A recurrent numerical challenge in peaks-over-threshold modeling is the computation of integrals that arise from censoring mechanisms and are often defined in high dimension equal to the number of data locations. Several methods have proven useful to bypass this bottleneck, such as gradient scoring, still anchored in likelihood-based inference, or completely likelihood-free techniques with recent neural Bayes estimators.

Many case studies, including the application example in this chapter, reveal that POT-stability is not reached in the observed extremes of many environmental processes since the strength of extremal dependence tends to decrease when increasing thresholds. Nevertheless, we emphasize that POT-stable models provide an elegant mathematical framework for modeling joint extreme events and remain useful for inferring extremal dependence characteristics, and also for predicting joint-event probabilities, but predictions will tend to be biased when extrapolating very far into the joint tail.

\bibliographystyle{plainnat}

\end{document}